\newtheorem{theorem}{Theorem}[section]
\newtheorem{lemma}[theorem]{Lemma}
\newtheorem{remark}[theorem]{Remark}
\newcommand{\R}{\mathbb{R}}
\begin{document}
\title{Ill-posedness results for generalized Boussinesq equations}

\author{Dan-Andrei Geba, A. Alexandrou Himonas, and David Karapetyan}

\address{Department of Mathematics, University of Rochester, Rochester, NY 14627}
\address{Department of Mathematics, University of Notre Dame, Notre Dame, IN 46556}
\address{Department of Mathematics, University of Rochester, Rochester, NY 14627}
\date{}

\begin{abstract}
In this article, we investigate both the periodic and non-periodic initial value problems for generalized Boussinesq equations. We show that the associated flow map is not smooth for a range of Sobolev indices, thus providing a threshold for the regularity needed to perform a Picard iteration for these problems. 
\end{abstract}

\subjclass[2000]{35B30, 35Q55}
\keywords{Boussinesq equation, well-posedness, ill-posedness.}

\maketitle

\section{Introduction}

In this paper, we are concerned with the Cauchy problem for the generalized Boussinesq equation
\begin{equation}
\left\{
\begin{array}{l}
u_{tt}-u_{xx}+u_{xxxx}+(f(u))_{xx}\,=\,0, \qquad u=u(t,x): \mathbb{R}_+\times M \to \mathbb{R},\\
\\
u(0,x)\,=\,u_0(x),\qquad u_t(0,x)\,=\,u_1(x),\\
\end{array}\right.
\label{main}
\end{equation}
where $M=\mathbb{R}$ (the non-periodic case) or $M=\mathbb{T}$ (the periodic case). Such an  equation, with $f(u)=4u^3-6u^5$, was derived by Falk, Laedke, and Spatschek \cite{FLS} in the study of shape-memory alloys. For $f(u) =  u^{2}$ one recovers the classical ``good'' Boussinesq equation, which is known to describe electromagnetic waves in nonlinear dielectrics \cite{T93}. 

As the ``good'' equation has been the subject of quite a few recent investigations (e.g., \cite{KT10}, \cite{OS12}, \cite{K12}), where almost-complete references for this problem have been discussed, we will mention here only previous results relevant to generalized type equations. 

In the non-periodic case, Bona and Sachs \cite{BS} proved local well-posedness (LWP) of \eqref{main} for $f\in C^\infty(\mathbb{R})$, $f(0)=0$, and $(u_0,u_1)\in H^s \times H^{s-2}$, $s>5/2$. Moreover, for pure power nonlinearities, $f(u)\simeq\pm |u|^{p-1}\,u$, with $1<p<5$, they showed nonlinear stability of solitary wave solutions and found sufficient conditions for the global existence of smooth solutions.  The LWP was then improved for pure power nonlinearities by Tsutsumi and Matahashi \cite{TM}, who demonstrated that this holds for $u_0\in H^1$ and $u_1=\phi_{xx}$, with $\phi\in H^1$. This was followed by Linares \cite{L93}, who proved LWP for $(u_0,u_1)=(g, h_x)$ when either $(g,h) \in H^1\times L^2$ and $p>1$ or $(g,h) \in L^2 \times \dot{H}^{-1}$  and $1<p\leq 5$. Furthermore, one has global well-posedness (GWP) in the former setting if $\|g\|_{H^1}+\|h\|_{L^2}$ is sufficiently small. Finally, Farah \cite{F092} showed LWP for $u_0\in H^s$ and $u_1=\phi_{xx}$, with $\phi\in H^s$, when
\[
p>1 \quad \text{and} \quad s\geq \max \left\{0,\,\frac{1}{2}-\frac{2}{p-1}\right\}. \]
For the defocusing problem $f(u)= -|u|^{p-1}\,u$, where $p\geq 3$ is an odd integer, Farah and Linares \cite{FL} and Farah and Wang \cite{FW12}, respectively, proved GWP for 
\[
u(0)\in H^s, \quad u_t(0)=h_x , \quad h \in H^{s-1}, \quad s> 1-\frac{2}{3(p-1)}.
\]

For the periodic problem associated to \eqref{main} there are considerably fewer results. In fact, to our knowledge, the only LWP result is due to Grillakis and Fang \cite{FG96}, who proved this for $(u_0,u_1)\in H^s \times H^{s-2}$  with 
\[
s\geq \max \left\{0,\,\frac{1}{2}-\frac{1}{p-1}\right\}. \]
They also showed that, for
\[
f(u)\,=\,\lambda |u|^{q-1}u - |u|^{r-1}u, \quad 1<q<r, \quad \lambda\in \mathbb{R},\] 
GWP holds if $(u_0,u_1)\in H^1\times H^{-1}$. 

An important fact revealed by this literature review is the absence of ill-posedness (IP) results for generalized Boussinesq equations. This  was also mentioned in  \cite{FW12}, where it was posed as an interesting open problem. The goal of this article is to answer this question, our results addressing both the non-periodic and periodic cases. 

\begin{theorem}
Consider the Cauchy problem \eqref{main} with  $f(u)=\pm\, u^p$, where $p>1$ is an integer, and let 
\begin{equation}
s\,<\,\left\{
\begin{array}{l}
-\frac{2}{p}, \quad \text{for} \quad p \ \text{odd},\\
\\
-\frac{1}{p}, \quad \text{for} \quad p \ \text{even}.\\
\end{array}\right.
\label{sp}
\end{equation} 
Then there exists $T>0$ such that the flow map
\begin{equation}
S(t):  H^{s} \times H^{s-2}(M) \to H^{s}(M), \quad S(t)(u_0,u_1)\,=\,u(t), \quad 0< t < T,
\label{solmap}
\end{equation}
for \eqref{main} is not $C^p$ Fr\'{e}chet-differentiable at zero.
\label{mainth}
\end{theorem}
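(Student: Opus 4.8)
The plan is to reduce the $C^p$ regularity of the flow map to the boundedness of a single $p$-linear operator coming from the first nontrivial Picard iterate, and then to defeat that boundedness with an explicit family of highly oscillatory data. Writing $\omega(\xi)=\sqrt{\xi^{2}+\xi^{4}}$ for the symbol of $\sqrt{-\partial_x^2+\partial_x^4}$, the Duhamel formulation of \eqref{main} reads
\[
u(t)=\cos(t\omega)\,u_0+\frac{\sin(t\omega)}{\omega}\,u_1\mp\int_0^t\frac{\sin((t-t')\omega)}{\omega}\,\partial_x^2\big(u(t')^p\big)\,dt'.
\]
Feeding in data $\delta(u_0,u_1)$ and expanding the solution as a formal power series $u=\sum_{k\ge1}\delta^k u^{(k)}$, the homogeneity of $f(u)=\pm u^p$ forces $u^{(2)}=\cdots=u^{(p-1)}=0$, so that the first nonlinear contribution is the $p$-linear term $u^{(p)}(t)=\mp\int_0^t\frac{\sin((t-t')\omega)}{\omega}\,\partial_x^2\big(u^{(1)}(t')^p\big)\,dt'$, where $u^{(1)}(t)=\cos(t\omega)u_0+\frac{\sin(t\omega)}{\omega}u_1$ is the linear evolution. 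If $S(t)$ were $C^p$ at the origin, its $p$-th Fr\'echet derivative would exist as a bounded symmetric $p$-linear map and, by uniqueness of the Taylor expansion, would agree with $p!\,u^{(p)}$; hence it suffices to produce data of unit size in $H^s\times H^{s-2}(M)$ for which $\|u^{(p)}(t)\|_{H^s}\to\infty$.

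Next I would take $u_1=0$ and choose $u_0$ concentrated in two frequency bands of width $O(1)$ near $\pm N$, normalized so that $\|u_0\|_{H^s}\sim1$; on the Fourier side each band then has height $\sim N^{-s}$, so the $p$-fold product carries amplitude $\sim N^{-ps}$. The whole argument hinges on which output frequencies $\xi_0=\xi_1+\dots+\xi_p$ the interaction can reach. Since $\omega$ is even with $\omega(\xi)\approx\xi^2$ at high frequency, the resonant interactions are those for which the resonance function $R=\sum_{j}\epsilon_j\omega(\xi_j)-\omega(\xi_0)$, with $\epsilon_j=\pm1$ arising from expanding the cosines, is $O(1)$. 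Here the parity of $p$ enters decisively: when $p$ is even the frequencies near $\pm N$ can be balanced so that $\xi_0$ ranges over an $O(1)$ neighborhood of the origin, whereas when $p$ is odd the smallest reachable $|\xi_0|$ is $\sim N$. Because $s<0$, the weight $\langle\xi_0\rangle^{s}$ rewards low output frequencies, making the even case more singular; this is precisely the origin of the two thresholds in \eqref{sp}.

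The core of the proof is then the estimate of the resulting oscillatory integral. Carrying out the $t'$-integration replaces the Duhamel factor by $(e^{itR}-1)/(iR)$, whose size is $\lesssim\min(t,|R|^{-1})$. Since $\partial R/\partial\xi_j\sim N$, a co-area computation shows that the near-resonant set in the fiber $\{\xi_1+\dots+\xi_p=\xi_0\}$ carries measure $\sim N^{-1}$, which is the principal source of decay competing against the amplitude $\sim N^{-ps}$. Combining this with the multiplier $\xi_0^2/\omega(\xi_0)$ — which behaves like $|\xi_0|$ near the origin and like $1$ for $|\xi_0|\sim N$ — and integrating $\langle\xi_0\rangle^{2s}|\widehat{u^{(p)}}(t,\xi_0)|^2$ over the reachable band yields $\|u^{(p)}(t)\|_{H^s}\gtrsim N^{\beta(s,p)}$, with $\beta(s,p)>0$ exactly when $s$ lies below \eqref{sp}.

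I expect the main obstacle to be making this resonance analysis sharp. One must track the lower-order behavior of $\omega(\xi)=|\xi|\sqrt{1+\xi^2}$ well enough to pin down both the measure of the resonant set and the exact power of $N$, taking care that the $\partial_x^2$ in the nonlinearity suppresses the contribution near $\xi_0=0$ in the even case; it is the balance of this suppression against the favorable weight that selects $-1/p$ rather than a larger exponent. The same scheme must be run uniformly in the periodic setting, where the fiber integrals become finite sums over integer frequencies, so the resonant configurations must be counted rather than measured, and one must verify that the counting reproduces the identical growth rates. Once the lower bound on $\|u^{(p)}(t)\|_{H^s}$ is established below \eqref{sp}, the unboundedness of $u^{(p)}$ as a $p$-linear map contradicts the existence of a bounded $p$-th Fr\'echet derivative, which completes the proof.
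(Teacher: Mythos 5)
Your reduction of the theorem to the unboundedness of the first nontrivial Picard iterate $u^{(p)}$, and the idea of testing it with data concentrated near frequencies $\pm N$, is exactly the paper's strategy. But there are two concrete gaps in the execution, each of which would prevent you from reaching the thresholds \eqref{sp}.

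First, the choice $u_1=0$ is not harmless. With $u_1=0$ the linear evolution on each frequency band is $\cos(t\omega)\widehat{u_0}=\tfrac12(e^{it\omega}+e^{-it\omega})\widehat{u_0}$, so in the $p$-fold product the phase signs $\epsilon_j$ are decoupled from the frequency signs, and the interaction terms come in pairs with resonance parameters $\beta$ and $-\beta$. The dominant part of the time integral \eqref{calc} is $\beta\sin(\alpha t)/(\beta^2-\alpha^2)$, which is \emph{odd} in $\beta$; summing over such a pair this leading $O(1/|\beta|)$ contribution cancels exactly, leaving only $O(1/\beta^2)$. Already for $p=2$ this degrades the lower bound from $N^{-1}$ to $N^{-2}$ and yields ill-posedness only for $s<-1$ rather than $s<-1/2$. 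The paper avoids this by choosing $u_1$ as in \eqref{u1n}, paired with $u_0$ so that each band of the linear solution carries a \emph{single} exponential whose phase sign is locked to the sign of the frequency; then every contributing term has $\beta$ of the same sign and comparable size, which is also the (missing) non-cancellation argument you need before you can assert any \emph{lower} bound on $\|u^{(p)}\|_{H^s}$ — your proposal only ever estimates sizes from above.

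Second, your treatment of the odd case is off track. You observe correctly that with a single band near $\pm N$ the smallest reachable output frequency is $\sim N$, but measuring the output there does not produce the exponent $-2/p$: tracking the powers of $N$ in your own scheme (weight $N^{s}$, amplitude $N^{-ps}$, Duhamel factor $N^{-1}$ or $N^{-2}$) gives a threshold of $-1/(p-1)$ or $-2/(p-1)$, not $-2/p$, so the claim that your $\beta(s,p)$ is positive exactly below \eqref{sp} is unsubstantiated. The paper instead forces the output back down to $\xi\approx 1$ by using \emph{two} bands, $A_N=\tilde A_N\cup\tilde A_{2N}$ with frequencies near $N$ and near $2N$, so that triplets $N+N-2N$ produce $O(1)$ output; the real work of the odd case is then the combinatorial classification of all representations \eqref{xi} (the diophantine analysis of the counts $n_1,\dots,n_4$), which guarantees $|\beta|\approx N^2$ with a uniform sign for every term. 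None of this appears in your outline, and without it the odd half of the theorem is not proved.
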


\begin{remark}
A careful inspection of the proof for Theorem \ref{mainth} reveals that in the particular case of the ``good'' Boussinesq equation, i.e., $p=2$, we obtain a much stronger result which was previously obtained by Kishimoto \cite{K12}. Our argument, which one needs to combine with a rather general method for proving IP introduced by Bejenaru and Tao \cite{BT06}, shows that the flow map is in fact not continuous at zero for $s<-\frac 12$.
\end{remark}

\begin{remark}
Our results apply also to generalized Boussinesq equations with
\[
u: \mathbb{R}_+\times M \to \mathbb{C},
\]
as all of the functions involved in our argument take real values. In these cases, the nonlinearity has the profile
\[
f(u)\,=\,\pm\,u^{p-q}\,\overline{u}^q,
\]
where both $p>1$ and $q\geq 0$ are integers and $p\geq q$. 
\end{remark}

\section{Preliminaries}
Using Duhamel's principle, we can rewrite \eqref{main} in the integral form
\begin{equation}
S(t)(u_0,u_1)\,=\,L(u_0,u_1)(t)\,\pm\, \int_0^t\,L \left( 0,\left((S(\tau)(u_0,u_1))^p\right)_{xx}\right)(t-\tau)\,d\tau,
\end{equation}
where
\begin{equation}
\widehat{L(u_0,u_1)}(t,\xi)\,=\,\cos(t \lambda(\xi))\, \widehat{u}_0(\xi)+\frac{\sin(t \lambda(\xi))}{\lambda(\xi)} \,\widehat{u}_1(\xi), \quad \lambda(\xi)\,=\,\sqrt{\xi^2+\xi^4}.
\label{L}
\end{equation}

If we assume that the flow map is smooth, its Fr\'{e}chet derivative at $(u_0,u_1)$ evaluated for $(v_0,v_1)$ is given by
\begin{equation}
\aligned
&DS(t)_{(v_0,v_1)}(u_0,u_1)\\
=L(v_0,v_1)(t)
&\pm p\int_0^t\,L \left( 0,(DS(\tau)_{(v_0,v_1)}(u_0,u_1)\left(S(\tau)(u_0,u_1)\right)^{p-1})_{xx}\right)(t-\tau)\,d\tau,
\endaligned\label{ds}
\end{equation}
which, combined with $S(t)(0,0)=0$ due to the well-posedness of the problem, yields
\begin{equation}
DS(t)_{(v_0,v_1)}(0,0)\,=\,L(v_0,v_1)(t).
\label{ds0}
\end{equation} 

Computations of further Fr\'{e}chet derivatives lead to 
\begin{equation}
\aligned
 D^pS(t)&_{\left((v^1_0,v^1_1),\ldots, (v^p_0,v^p_1)\right)}(0,0)\\
=\,c_p\,\int_0^t\,L& \left( 0,\left(DS(\tau)_{(v^1_0,v^1_1)}(0,0)\,\cdot \ldots\cdot\,DS(\tau)_{(v^p_0,v^p_1)}(0,0)\right)_{xx}\right)(t-\tau)\,d\tau\\
=\,c_p\,\int_0^t\,L& \left( 0,\left(L(v^1_0,v^1_1)(\tau)\,\cdot \ldots\cdot\,L(v^p_0,v^p_1)(\tau)\right)_{xx}\right)(t-\tau)\,d\tau,
\endaligned
\label{dps}
\end{equation}
where $c_p$ is a constant strictly depending on $p$. If the flow map would be $C^p$ at zero, 
then 
\begin{equation}
 \left\|D^pS(t)_{\left((v^1_0,v^1_1),\ldots, (v^p_0,v^p_1)\right)}(0,0)\right\|_{H^s}\,\lesssim\,\prod_{j=1}^{p}\, \left(\left\|v^j_0\right\|_{H^s}+\left\|v^j_1\right\|_{H^{s-2}}\right)
\end{equation}
would be true, at least for $(v^1_0,v^1_1),\ldots, (v^p_0,v^p_1)$ all lying within a sufficiently small ball of $ H^s\times H^{s-2}$ centered at the origin. In the particular case when $(v^1_0,v^1_1)=\ldots= (v^p_0,v^p_1)=(u_0,u_1)$, this estimate reads as
\begin{equation}
 \left\|D^pS(t)_{\left((u_0,u_1),\ldots, (u_0,u_1)\right)}(0,0)\right\|_{H^s}\,\lesssim\,\left(\|u_0\|_{H^s}+\|u_1\|_{H^{s-2}}\right)^p.
\end{equation}

Using the notation
\[
A_p(u_0,u_1)(t)\,=\,D^pS(t)_{\left((u_0,u_1),\ldots, (u_0,u_1)\right)}(0,0)
\]
for brevity, Theorem \ref{mainth} will be proved if we can construct a sequence of initial data $\left(u^N_0,u^N_1\right)_N\subset H^s\times H^{s-2}$ satisfying
\begin{equation}
\lim_{N\to \infty}\, \left\|u_0^N\right\|_{H^{s}} + \left\|u_1^N\right\|_ {H^{s-2}}\,=\,0
\label{ivhs}
\end{equation}
and 
\begin{equation}
\lim_{N\to \infty}\, \frac{\left\|A_{p}(u_0^N,u_1^N)(t)\right\|_{H^s}}{\left(\left\|u_0^N\right\|_{H^{s}} + \left\|u_1^N\right\|_ {H^{{s}-2}}\right)^p}\,=\,\infty, \qquad (\forall)\, 0 < t < T.
\label{ratio}
\end{equation}

The general profile for our initial data is
\begin{align}
\widehat{u^N_0}(\xi)\,&=\,\frac{1}{N^{\sigma}}\,\left(\varphi_{A_N}(\xi)\,+\,\varphi_{-A_N}(\xi)\right), \label{u0n}\\
\widehat{u^N_1}(\xi)\,&=\,-i \frac{1}{N^{\sigma}}\,\lambda(\xi)\,\left(\varphi_{A_N}(\xi)\,-\,\varphi_{-A_N}(\xi)\right),\label{u1n}
\end{align}
where $\sigma>s$ is a real parameter, $(A_N)_N$ is a sequence of subsets of $\R$ or $\mathbb{T}$ to be specified later, and $\varphi_A$ is the characteristic function of the set $A$. Using \eqref{L}, we obtain that
\begin{equation}\aligned
\widehat{L(u^N_0,u^N_1)}(t,\xi)\,&=\, \frac{1}{N^\sigma}\,\left(e^{-it\lambda(\xi)}\varphi_{A_N}(\xi)\,+\,e^{it\lambda(\xi)}\varphi_{-A_N}(\xi)\right)\\
&=\,\frac{1}{N^\sigma}\,e^{\mp it\lambda(\xi)}\varphi_{\pm A_N}(\xi),
\endaligned
\label{l+}
\end{equation}
where we assumed in the last expression an Einstein summation convention for $\pm$. Moreover, the sign convention is the one suggested by the above notation, i.e., if $\xi\in \pm \,A_N$, then the corresponding exponent is $\mp \,it\lambda(\xi)$. 

Subsequently, based on \eqref{dps} and \eqref{l+}, we deduce
\begin{equation}
\aligned
\widehat{A_p(u^N_0,u^N_1)}(t,\xi)\,=\,& \frac{\xi^2}{N^{p\sigma}\lambda(\xi)}\int_0^t \,\sin((t-\tau) \lambda(\xi))\,\cdot\\
\bigg[\int_{\R^{p-1}}\varphi_{\pm A_N}(\xi-\sum_{j=1}^{p-1} \eta_j)&\cdot e^{\mp i\tau\lambda(\xi-\sum_{j=1}^{p-1}\eta_j)}\,
\prod_{j=1}^{p-1}\varphi_{\pm A_N}(\eta_j)\cdot e^{\mp i\tau\lambda(\eta_j)}\,d\eta_1\ldots d\eta_{p-1}\bigg]d\tau
\endaligned
\label{apn}
\end{equation}
for the non-periodic problem and 
\begin{equation}
\aligned
\widehat{A_p(u^N_0,u^N_1)}(t,\xi)\,=\,& \frac{\xi^2}{N^{p\sigma}\lambda(\xi)}\int_0^t \,\sin((t-\tau) \lambda(\xi))\,\cdot\\
\bigg[\sum_{(\eta_1, \ldots, \eta_{p-1})\in \mathbb{Z}^{p-1}}\varphi_{\pm A_N}(\xi-\sum_{j=1}^{p-1} \eta_j)&\cdot e^{\mp i\tau\lambda(\xi-\sum_{j=1}^{p-1}\eta_j)}\,
\prod_{j=1}^{p-1}\varphi_{\pm A_N}(\eta_j)\cdot e^{\mp i\tau\lambda(\eta_j)}\bigg]d\tau
\endaligned
\label{app}
\end{equation}
for the periodic one. For both formulae, the time integral is of the type
\begin{equation}
\aligned
\int_0^t\,\sin(\alpha(t-\tau))\,e^{i\beta \tau}\,d\tau\,&=\,\frac{-\alpha}{\beta^2-\alpha^2}(\cos (\beta t)-\cos (\alpha t))\\
&+\,i\left[\frac{-\alpha}{\beta^2-\alpha^2}\sin (\beta t) + \frac{\beta}{\beta^2-\alpha^2}\sin (\alpha t)\right],
\endaligned
\label{calc}
\end{equation}
where $\alpha\neq \beta$ are real parameters. This allows us to explain better the main ideas in our argument. 

First, we localize the initial data $(u^N_0,u^N_1)$ at frequency $N$, i.e.,
\[
\eta\in \pm \,A_N\quad \Longrightarrow \quad |\eta| \approx N,\]
and we measure the output of $A_p$ only at frequency $\xi \approx 1$. Then, we can argue that
\begin{equation}
\aligned
\left\|A_{p}\left(u_0^N,u_1^N\right)(t)\right\|_{H^s}\,&\geq\, \left\|A_{p}\left(u_0^N,u_1^N\right)(t)\right\|_{H^{s}(\xi \approx 1)}\\
&\gtrsim\, \left\|A_{p}\left(u_0^N,u_1^N\right)(t)\right\|_{L^2(\xi \approx 1)}.
\endaligned
\label{aphs}
\end{equation}

Secondly, the generic term in either \eqref{apn} or \eqref{app} has the profile of the integrand in \eqref{calc} with
\begin{equation}
\alpha=\lambda(\xi) \quad \text{and} \quad \beta= -\,\epsilon_1\lambda(a_1)-\epsilon_2\lambda(a_2)-\ldots-\epsilon_p\lambda(a_p),
\label{ab}
\end{equation}
where
\begin{equation}
\xi=  \epsilon_1 a_1+\epsilon_2 a_2+ \ldots+\epsilon_p a_p, \qquad \epsilon_j=\pm \,1, \ a_j\in A_N, \ (\forall)1\leq j\leq p.\label{xi}
\end{equation}

Finally, the key point in the argument is the construction of a sequence of subsets $(A_N)_N$ such that, for all the terms in \eqref{apn}-\eqref{app} and for $N$ sufficiently large,
\begin{equation}
 |\beta|\,=\, \left\{
\begin{array}{l}
\ \beta \approx N^{2}, \quad \text{for} \quad p \ \text{odd},\\
\\
-\beta \approx N, \quad \text{for} \quad p \ \text{even}.\\
\end{array}\right.
\end{equation}
This rules out possible cancellations and reduces the analysis of the problem to the one of the generic term.

\section{Proof of Theorem~\ref{mainth} for $M=\R$}

We split the discussion into two cases, depending on the parity of $p$.

\subsection{Argument for $p$ even}
Here, the choice for the sequence $(A_N)_N$ is 
\[
A_N\,=\,[N,N+1], \quad (\forall) N \geq 1.
\]
Then, for $\xi \in \left[\frac{1}{4},\frac{1}{2}\right]$ and $N$ sufficiently large, precisely half of the terms in the representation \eqref{xi} have the coefficient $\epsilon =1$. Otherwise,
\[
\left|\epsilon_1 a_1+\epsilon_2 a_2+ \ldots+\epsilon_p a_p\right|\,\gtrsim\,N.\]
Therefore, eventually renotating the indices, we can assume that
\[
\xi=a_1+ \ldots+ a_{\frac p2} - a_{\frac{p+1}{2}}-\ldots-a_p 
\]
and 
\[
\beta=-\lambda(a_1)- \ldots- \lambda(a_{\frac p2}) + \lambda(a_{\frac{p+1}{2}})+\ldots+ \lambda(a_p). 
\]

Relying on 
\[
a^2\,\leq\,\lambda (a)\,\leq\, a^2 +\frac 12,
\]
we infer that
\begin{equation}
-\beta\,\geq\, a^2_1+ \ldots+ a^2_{\frac p2} - a^2_{\frac{p+1}{2}}-\ldots-a^2_p\,-\,\frac p2\,=\,\sum_{j=1}^{\frac p2} \left(a^2_j-a^2_{\frac{p}{2}+j}\right)\,-\,\frac p2. 
\label{-b}
\end{equation}
However, for $a,b \in A_N$, we have the estimate
\[
a^2-b^2\,\geq\,\min\left\{ 2N(a-b),(2N+2)(a-b)\right\},
\]
which implies, based on \eqref{-b},
\begin{equation}
\aligned
-\beta\,&\geq\, 2N \sum_{j=1}^{\frac p2} \left(a_j-a_{\frac{p}{2}+j}\right)\,+\, 2 \sum_{\left\{j: a_j< a_{\frac{p}{2}+j}\right\}} \left(a_j-a_{\frac{p}{2}+j}\right)\,-\,\frac p2\\
&=\,2N\xi\,+\, 2 \sum_{\left\{j: a_j< a_{\frac{p}{2}+j}\right\}} \left(a_j-a_{\frac{p}{2}+j}\right)\,-\,\frac p2\,>\,\frac{N-3p}{2}.\endaligned
\label{-bN}
\end{equation}

On the other hand, the elementary inequality
\[
\left|\lambda(a)-\lambda(b)\right|\,\lesssim \,N, \qquad (\forall) a,b \in A_N, 
\]
implies $|\beta|\lesssim N$. Hence, using \eqref{-bN}, we deduce that, for all the terms in \eqref{apn} corresponding to $\xi \in \left[\frac{1}{4},\frac{1}{2}\right]$ and for $N$ sufficiently large, the following uniform bound holds:
\begin{equation}
-\beta= |\beta| \approx N.
\label{beven}
\end{equation}

Coupling this estimate with \eqref{apn}, \eqref{calc}, and the fact that the measure of $A_N$ is 1, we obtain
\[
\left|\widehat{A_p(u^N_0,u^N_1)}(t,\xi)\right|\,\gtrsim\,\frac{1}{N^{p\sigma+1}}\,|\sin(\lambda(\xi)t)|,
\]  
which leads, based on \eqref{aphs}, to
\begin{equation}
\left\|A_{p}\left(u_0^N,u_1^N\right)(t)\right\|_{H^s}\,\gtrsim\,  \frac{1}{N^{p\sigma+1}}\, \left(\int_\frac{1}{4}^\frac{1}{2}\,\sin^2(\lambda(\xi)t)\,d\xi\right)^\frac{1}{2}.
\label{aphs2}
\end{equation} 

Using $A_N\,=\,[N,N+1]$ in \eqref{u0n} and \eqref{u1n}, direct computations infer that 
\begin{equation}
\left\|u_0^N\right\|_{H^{s}} + \left\|u_1^N\right\|_ {H^{{s}-2}}\,\approx\,N^{s-\sigma},\quad (\forall)\,N\geq 1.\label{idhs} 
\end{equation}
Thus \eqref{ivhs} follows as $\sigma>s$. 

Finally,  combining \eqref{aphs2} and \eqref{idhs}, we deduce for $N$ sufficiently large that
\begin{equation}
 \frac{\left\|A_{p}(u_0^N,u_1^N)(t)\right\|_{H^s}}{\left(\left\|u_0^N\right\|_{H^{s}} + \left\|u_1^N\right\|_ {H^{{s}-2}}\right)^p}\,\gtrsim\, \frac{1}{N^{ps+1}},\end{equation}
which yields \eqref{ratio} due to \eqref{sp}.

\subsection{Argument for $p$ odd}
The first remark we want to make here is that the previous choice for $A_N$ (i.e., $A_N\,=\,[N,N+1]$) doesn't work in this case, because, as $p$ is odd, 
\[
\left|\epsilon_1 a_1+\epsilon_2 a_2+ \ldots+\epsilon_p a_p\right|\,\approx\,N, 
\]
for all possible representations with $\epsilon_j=\pm \,1$, $a_j\in A_N$,  and $1\leq j\leq p$. 

Instead, we pick
\[
A_N\,=\,\tilde{A}_N\,\cup\,\tilde{A}_{2N}\,=\,\left[N+\frac{3(p-1)}{2p^2},N+\frac{3(p+2)}{2p^2}\right]\,\cup\, \left[2N, 2N+\frac{3}{p^2}\right], \quad (\forall) N \geq 1,
\]
the idea being that we can create something comparable to $1$ with $\frac p3$ triplets $(a,b,c)$, where $a$, $b \in \tilde{A}_N$ and $c \in \tilde{A}_{2N}$. Intuitively, in the representation \eqref{xi}, we will have $\epsilon=1$ associated to roughly $\frac{2p}{3}$ terms from $\tilde{A}_N$ and  $\epsilon=-1$ connected to $\frac p3$ terms from $\tilde{A}_{2N}$. This line of reasoning prompts a $\text{mod}\,3$ discussion in terms of $p$ as follows.

The interval on which we restrict $\xi$ to live in is given by
\begin{equation}
\xi \in I_p\,=\, \left\{
\begin{array}{l}
\left[1-\frac{2}{p}, \,1+\frac{2}{p}\right],  \qquad \qquad \qquad \ \ p\equiv 0\, (\text{mod}\,3),\\
\\
\left[1-\frac{3}{p}-\frac{4}{p^2}, \, 1-\frac{2}{p}-\frac{8}{p^2}\right], \qquad  p\equiv 1\,(\text{mod}\,3),\\
\\
\left[1-\frac{4}{p}+\frac{4}{p^2}, \,1-\frac{4}{p^2}\right], \qquad  \qquad p\equiv 2\,(\text{mod}\,3).\\
\end{array}\right.
\end{equation}

Our goal is to show that for all the terms in \eqref{apn} we have the uniform bound
\begin{equation}
\beta= |\beta| \approx N^2.
\label{bodd}
\end{equation} 
We achieve this by proving two claims:
\begin{itemize}
\item for all $\xi \in I_p$, there exists a representation of type \eqref{xi};

\item if $\xi \in I_p$ is given by a representation of type \eqref{xi}, then this expression has one the following generic profiles:
\begin{equation}
\xi=(N+N-2N)+\ldots+(N+N-2N) \ \text{if}\ p\equiv 0\, (\text{mod}\,3),
\label{0mod3}
\end{equation}
\begin{equation}
\xi= (N+N-2N)+\ldots+(N+N-2N)+(N-N)+(N-N), \ \text{if}\ p\equiv 1\, (\text{mod}\,3),\label{1mod31}
\end{equation}
\begin{equation}
\xi= (N+N-2N)+\ldots+(N+N-2N)+(N-N)+(2N-2N), \ \text{if}\ p\equiv 1\, (\text{mod}\,3), \label{1mod32}
\end{equation}
\begin{equation}
\xi= (N+N-2N)+\ldots+(N+N-2N)+(2N-2N)+(2N-2N), \ \text{if}\ p\equiv 1\,(\text{mod}\,3), \label{1mod33}
\end{equation}
\begin{equation}
\xi= (N+N-2N)+\ldots+(N+N-2N)+(N-N), \ \text{if}\ p\equiv 2\, (\text{mod}\,3), \label{2mod31}
\end{equation}
\begin{equation}
\xi= (N+N-2N)+\ldots+(N+N-2N)+(2N-2N), \ \text{if}\ p\equiv 2\, (\text{mod}\,3), \label{2mod32}
\end{equation}
\end{itemize}
where $N$ denotes an entry from $\tilde{A}_N$, $2N$ stands for one from $\tilde{A}_{2N}$, and there are precisely $2 \lfloor\frac{p-3}{6}\rfloor +1$ parentheses containing the combination $N+N-2N$. Here, $\lfloor z \rfloor$ is the largest integer not greater than $z$. 

If we assume the two claims, we obtain that 
\begin{equation} 
-\beta=(\lambda(N)+\lambda(N)-\lambda(2N))+\ldots+(\lambda(N)+\lambda(N)-\lambda(2N)) + \ \text{Error},\end{equation}
where by Error we designate any term which is not part of a triplet. Based on \eqref{0mod3}-\eqref{2mod32}, we have
\begin{equation}
\left|\text{Error}\right|\,\lesssim\, \left|\lambda(N)-\lambda(N)\right| +  \left|\lambda(2N)-\lambda(2N)\right|,\end{equation}
the notation convention regarding $N$ and $2N$ being as above. Simple computations yield, for sufficiently large $N$, 
\begin{equation}
\lambda(N)+\lambda(N)-\lambda(2N)\,=\,-2N^2 + O(N),
\end{equation} 
where $O$ is the usual big O notation, and
\begin{equation}
\left|\lambda(N)-\lambda(N)\right| +  \left|\lambda(2N)-\lambda(2N)\right|\,\lesssim\, N,
\end{equation}
both of which are uniformly in $N$.  \eqref{bodd} is then immediate.

Afterwards, the argument for verifying \eqref{ivhs} and \eqref{ratio} is almost identical to the one for $p$ even, the only true modification being generated by \eqref{bodd}, which leads to 
\begin{equation}
\left\|A_{p}\left(u_0^N,u_1^N\right)(t)\right\|_{H^s}\,\gtrsim\,  \frac{1}{N^{p\sigma+2}}\, \left(\int_{I_p}\,\sin^2(\lambda(\xi)t)\,d\xi\right)^\frac{1}{2},
\label{aphs3}
\end{equation} 
instead of \eqref{aphs2}. Other modifications involve the definitions for $A_N$ and for the interval in which $\xi$ varies, but one can see right away that these are easily manageable.  

Therefore, in order to finish the proof for this case, all we need is to verify the two claims. For the first one, it  is easy to see that the range of values for $a+b-c$, where  $a$, $b \in \tilde{A}_N$ and $c \in \tilde{A}_{2N}$, is 
\[
\left[ \frac{3(p-2)}{p^2},  \frac{3(p+2)}{p^2}
\right] .
\]
Hence, the sum of $2 \lfloor\frac{p-3}{6}\rfloor +1$ such terms generates everything in
\[
\left[ \left(2 \lfloor\frac{p-3}{6}\rfloor +1\right)\frac{3(p-2)}{p^2},  \left(2 \lfloor\frac{p-3}{6}\rfloor +1\right)\frac{3(p+2)}{p^2}
\right],
\]
which, in turn, it is straightforward to check through a $\text{mod}\,3$ analysis that it contains $I_p$. The remaining $p-6 \lfloor\frac{p-3}{6}\rfloor -3=0$, $2$, or $4$ terms in the expression of $\xi$ can then be easily filled in with $(N-N)$ or $(N-N)+(N-N)$ combinations.  

The verification of the second claim is the most involved part of the argument. We start by introducing, for each representation of $\xi\in I_p$ as \eqref{xi}, the following notation:
\begin{align}
n_1\,=\,\left|\left\{ i| a_i\in  \tilde{A}_N \ \text{and} \ \epsilon_i = 1\right\}\right|,\label{n1}\\
n_2\,=\,\left|\left\{ i| a_i\in  \tilde{A}_N \ \text{and} \ \epsilon_i = -1\right\}\right|,\label{n2}\\
n_3\,=\,\left|\left\{ i| a_i\in  \tilde{A}_{2N} \ \text{and} \ \epsilon_i = 1\right\}\right|,\label{n3}\\
n_4\,=\,\left|\left\{ i| a_i\in  \tilde{A}_{2N} \ \text{and} \ \epsilon_i = -1\right\}\right|. \label{n4}
\end{align} 

It is clear that 
\begin{equation}
n_1\,+\,n_2\,+\,n_3\,+\,n_4\,=\,p
\label{sumn}
\end{equation}
and, for $N$ sufficiently large, we also have
\begin{equation}
n_1\,-\,n_2\,+\,2(n_3\,-\,n_4)\,=\,0,
\label{difn}
\end{equation}
as $\xi\in I_p$ forces the $N$'s to cancel out. Next, we prove:

\begin{lemma}
If $N$ is sufficiently large, then any representation of $\xi\in I_p$ as \eqref{xi} has
\begin{equation}
n_1\,>\,n_2  \ \text{and} \ n_3\,<\,n_4.
\end{equation}
\end{lemma}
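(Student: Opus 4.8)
The plan is to argue by contradiction, after first noting that the two asserted inequalities are equivalent. Indeed, the cancellation relation \eqref{difn} reads $n_1-n_2=2(n_4-n_3)$, so $n_1>n_2$ holds if and only if $n_3<n_4$; it therefore suffices to establish the single inequality $n_1>n_2$. Setting $m:=n_1-n_2$, I would assume toward a contradiction that $m\le 0$ and show that $\xi$ cannot then lie in $I_p$. To set up the bookkeeping I write each frequency as $a_j=N+\delta_j$ when $a_j\in\tilde{A}_N$, with $\delta_j\in\left[\frac{3(p-1)}{2p^2},\frac{3(p+2)}{2p^2}\right]$, and as $a_j=2N+\gamma_j$ when $a_j\in\tilde{A}_{2N}$, with $\gamma_j\in\left[0,\frac{3}{p^2}\right]$. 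Since \eqref{difn} is precisely the statement that the $N$-order part of $\xi$ vanishes, the representation \eqref{xi} collapses to a sum involving only the small offsets $\delta_j$ (signed by $\epsilon_j$ over the $\tilde{A}_N$ indices) and $\gamma_j$ (signed over the $\tilde{A}_{2N}$ indices).

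The crucial observation, and the step I expect to carry the whole argument, is a parity constraint. By \eqref{difn}, $m=-2(n_3-n_4)$ is even; moreover, if $m=0$ then $n_1=n_2$ and $n_3=n_4$, forcing $p=2(n_1+n_3)$ to be even and contradicting the hypothesis that $p$ is odd. Hence, under the standing assumption $m\le 0$, we actually have $m\le -2$. This gap is what rescues the estimate: a naive bound on the offsets alone only gives $\xi\lesssim 1/p$, which is far too weak to contradict $\xi\in I_p$ for small $p$, whereas the separation $|m|\ge 2$ will be exactly enough to push the upper bound for $\xi$ below zero.

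To make this precise, I bound $\xi$ from above by taking the largest admissible offset on every positive term and the smallest on every negative one, giving
\[
\xi\,\le\, n_1\,\frac{3(p+2)}{2p^2}\,-\,n_2\,\frac{3(p-1)}{2p^2}\,+\,n_3\,\frac{3}{p^2}.
\]
Using \eqref{sumn} and \eqref{difn} to substitute $n_2=n_1-m$, $n_4=n_3+m/2$, and $n_1+n_3=\frac{p}{2}+\frac{m}{4}$, the right-hand side becomes an increasing linear function of $n_1$. Maximizing over the feasible range, where the binding constraint is $n_4=n_3+m/2\ge 0$ (so $n_1\le \frac{p}{2}+\frac{3m}{4}$), yields after simplification
\[
\xi\,\le\,\frac{9}{4p}\,+\,m\,\frac{3(4p+1)}{8p^2}.
\]
Since $m\le -2$, the right-hand side is at most $-\frac{3(p+1)}{4p^2}<0$, whereas a direct check of the three cases shows $\min I_p>0$ for every admissible odd $p>1$; thus $\xi\in I_p$ is impossible, the desired contradiction. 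The one genuinely delicate point, beyond routine algebra, is the interplay between the parity and nonnegativity of the $n_i$ and the feasibility constraints: it is precisely the bound $n_4\ge 0$ together with $m\le -2$ that pins $n_1$ to its reduced maximum $\frac{p}{2}+\frac{3m}{4}$, rather than the larger value a continuous relaxation would permit, and hence drives the estimate negative.
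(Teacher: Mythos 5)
Your proposal is correct and follows essentially the same route as the paper: rule out $n_1=n_2$ by the parity of $p$, then use the extremal offsets together with the linear relations \eqref{sumn}--\eqref{difn} to force $\xi<0$, contradicting $\xi\in I_p$. Your maximization of the increasing linear function of $n_1$ subject to $n_4\ge 0$ is just a repackaging of the paper's direct substitution (with $\gamma=n_3-n_4\ge 1$ and $n_4\ge 0$), and your final bound $\frac{9}{4p}+m\,\frac{3(4p+1)}{8p^2}$ at $m=-2$ coincides exactly with the paper's $\frac{9}{4p}-\frac{12p+3}{4p^2}$.
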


\begin{proof}
If $n_1=n_2$, then, by \eqref{difn}, $n_3=n_4$, which, according to \eqref{sumn}, leads to $p$ even, a contradiction. If $n_1<n_2$, then, again by \eqref{difn}, $n_3>n_4$. Using the notation $n_3-n_4=\gamma\geq 1$, we deduce, from \eqref{sumn} and \eqref{difn}, that
\begin{equation}
n_1\,=\,\frac{p-3\gamma}{2} - n_4, \qquad n_2\,=\,\frac{p+\gamma}{2} - n_4.
\label{n1n2}
\end{equation}
We can infer then
\[
\aligned
\xi\,&=\,  \epsilon_1 a_1+\epsilon_2 a_2+ \ldots+\epsilon_p a_p\\ 
&\leq\, n_1\left(N+ \frac{3(p+2)}{2p^2}\right) - n_2 \left(N+\frac{3(p-1)}{2p^2}\right) + n_3\left(2N +\frac{3}{p^2}\right) - n_4\cdot 2N\\
&=\,\frac{9}{4p} - \gamma \cdot \frac{12p+3}{4p^2} - n_4\cdot \frac{3}{2p^2}\,\leq\,\frac{9}{4p} - \frac{12p+3}{4p^2}\,<\,0,\endaligned
\]
contradicting $\xi>0$, as $\xi\in I_p$. This concludes the proof.
\end{proof}

Using this result, we prove the following bound concerning $n_2$ and $n_3$.

\begin{lemma}
If $N$ is sufficiently large, then any representation of $\xi\in I_p$ as \eqref{xi} has
\begin{equation}
n_2\,+\,n_3\,\leq\, 2.
\label{n2n3}
\end{equation}
\end{lemma}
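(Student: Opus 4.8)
The plan is to collapse the four counts $n_1,n_2,n_3,n_4$ to a single integer parameter and then to extract from $\xi\in I_p$ a sharp upper bound on $\xi$ that is strictly decreasing in $n_2+n_3$. First, using the previous lemma ($n_1>n_2$ and $n_3<n_4$) together with \eqref{sumn} and \eqref{difn}, I would set $\gamma=n_4-n_3\geq 1$, so that \eqref{difn} gives $n_1-n_2=2\gamma$ and \eqref{sumn} then yields the key identity
\[
n_2+n_3\,=\,\frac{p-3\gamma}{2}.
\]
Thus $n_2+n_3\leq 2$ is equivalent to $\gamma\geq\frac{p-4}{3}$, i.e.\ the number of net $(N+N-2N)$ triplets is as large as the budget $p$ allows. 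Note also that, since $n_2+n_3$ is a nonnegative integer, $p-3\gamma$ must be a nonnegative \emph{even} integer; this confines $n_2+n_3$ to a single residue class modulo $3$, determined by $p\bmod 3$.

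Next I would estimate $\xi$ directly from the representation \eqref{xi}. Writing each $a_i\in\tilde{A}_N$ as $N+x_i$ and each $a_i\in\tilde{A}_{2N}$ as $2N+y_i$, the integer $N$-parts contribute $(n_1-n_2+2n_3-2n_4)N=0$ by \eqref{difn}, so $\xi$ equals the signed sum of the fractional offsets $x_i,y_i$. Using the explicit endpoints $x_i\in\left[\frac{3(p-1)}{2p^2},\frac{3(p+2)}{2p^2}\right]$ and $y_i\in\left[0,\frac{3}{p^2}\right]$, I would maximize the positive and minimize the negative offsets to obtain
\[
p^2\xi\,\leq\,\frac{3(p+2)}{2}\,n_1-\frac{3(p-1)}{2}\,n_2+3n_3.
\]
Substituting $n_1=n_2+2\gamma$ and $3\gamma=p-2(n_2+n_3)$, and bounding $n_2\leq n_2+n_3$, this collapses after routine algebra to a bound depending only on $\mu:=n_2+n_3$,
\[
\xi\,\leq\,1+\frac{2}{p}-\frac{2\mu}{p}+\frac{\mu}{2p^2},
\]
which is a strictly decreasing function of $\mu$.

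Finally I would argue by contradiction. Assuming $n_2+n_3\geq 3$, the feasibility constraint forces the smallest admissible value of $\mu$ to be $3$, $5$, or $4$ according to $p\equiv 0,1,2\pmod 3$. Plugging each into the displayed upper bound and comparing with the left endpoint of the corresponding interval $I_p$, a one-line computation shows the bound falls strictly below $\min I_p$ in every case (for all relevant $p$), contradicting $\xi\in I_p$; monotonicity in $\mu$ then rules out all larger admissible values simultaneously. Hence $n_2+n_3\leq 2$.

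The delicate point—and the reason the sets $\tilde{A}_N$, $\tilde{A}_{2N}$, and $I_p$ were defined with those precise $O(p^{-2})$ offsets—is that the correction terms of size $O(p^{-2})$ are comparable to the $O(p^{-1})$ scale of $I_p$, so the estimate must be genuinely tight rather than asymptotic. In particular, the $\bmod\,3$ feasibility of $\mu$ is indispensable: without it one would have to exclude $\mu=3$ directly, which the bound fails to do for the smallest $p\equiv 1\pmod 3$. Everything else is bookkeeping across the three residue classes.
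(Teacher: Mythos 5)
Your proof is correct and follows essentially the same route as the paper's: set $\gamma=n_4-n_3\geq 1$, bound $\xi$ from above using the endpoints of $\tilde{A}_N$ and $\tilde{A}_{2N}$ (your displayed inequality for $p^2\xi$ is exactly the paper's estimate), and contradict $\xi\geq \min I_p$ by a case check mod $3$. Your additional use of the congruence $2\mu\equiv p\ (\mathrm{mod}\ 3)$ to jump to $\mu=3,5,4$ is a refinement the paper does not need --- its cruder bound $\xi\leq 1-\frac{4}{p}+\frac{3}{2p^2}$, obtained from $\mu\geq 3$ alone, already lies below $\min I_p$ in all three residue classes --- and your closing remark that the bound ``fails'' to exclude $\mu=3$ for the smallest $p\equiv 1\ (\mathrm{mod}\ 3)$ is in fact incorrect ($1-\frac{4}{7}+\frac{3}{98}=1-\frac{53}{98}<1-\frac{50}{98}=\min I_7$), though this does not affect the validity of your argument.
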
 

\begin{proof}
Based on the previous lemma, we can write $n_4-n_3=\gamma\geq 1$, which leads, as above, to
\begin{equation}
n_1\,=\,\frac{p+\gamma}{2} - n_3, \qquad n_2\,=\,\frac{p-3\gamma}{2} - n_3.
\label{n1n2v2}
\end{equation}
It follows that
\[
\aligned
\xi\,&=\,  \epsilon_1 a_1+\epsilon_2 a_2+ \ldots+\epsilon_p a_p\\ 
&\leq\, n_1\left(N+ \frac{3(p+2)}{2p^2}\right) - n_2 \left(N+\frac{3(p-1)}{2p^2}\right) + n_3\left(2N +\frac{3}{p^2}\right) - n_4\cdot 2N\\
&=\,\frac{9}{4p} + \gamma \cdot \frac{12p-3}{4p^2} - n_3\cdot \frac{3}{2p^2}\,\leq\,\frac{9}{4p} + \gamma \cdot \frac{12p-3}{4p^2}.\endaligned
\]
From \eqref{n1n2v2}, we obtain
\[
n_2+n_3\,=\,\frac{p-3\gamma}{2}.\]
Therefore, if we argue by contradiction and assume $n_2\,+\,n_3\,\geq\, 3$, we deduce
\[
\gamma \,\leq\, \frac{p-6}{3},
\]
which implies on one hand $p\geq 9$. On the other hand, used in the above estimate involving $\xi$, it leads to 
\[
\xi\,\leq\, \frac{9}{4p} + \gamma \cdot \frac{12p-3}{4p^2}\,\leq\,\frac{9}{4p} +  \frac{(p-6)(4p-1)}{4p^2}\,=\, 1-\frac{4}{p}+\frac{6}{4p^2}.\] 
One can check case by case that, for $p\geq 9$, this upper bound for $\xi$ is less than the left-hand endpoint of $I_p$, which is obviously a contradiction. 
\end{proof}

As it turns out, the two lemmas are all that is needed to fully solve, for $(n_1, n_2, n_3, n_4)$, the diophantine system composed of \eqref{sumn} and \eqref{difn}. We argue by considering the possible values of $n_2+n_3$, which, according to \eqref{n2n3}, are $0$, $1$, and $2$.

If $n_2+n_3=0$, then $n_2=n_3=0$ and, using the notation in the previous lemma, $p=3\gamma$. This implies that $ p\equiv 0\, (\text{mod}\,3)$ and, due to \eqref{n1n2v2}, we obtain
\[
(n_1,n_2,n_3,n_4)\,=\,\left(\frac{2p}{3},\,0,\,0,\,\frac{p}{3}\right),
\]
which is exactly the profile \eqref{0mod3}.

If $n_2+n_3=1$, then $p=3\gamma+2$ and either $(n_2,n_3)=(1,0)$ or $(n_2,n_3)=(0,1)$. It follows that $ p\equiv 2\, (\text{mod}\,3)$ and, again due to \eqref{n1n2v2}, we obtain either
\[
(n_1,n_2,n_3,n_4)\,=\,\left(\frac{2p-1}{3},\,1,\,0,\,\frac{p-2}{3}\right),
\]
which is described by \eqref{2mod31}, or
\[
(n_1,n_2,n_3,n_4)\,=\,\left(\frac{2p-4}{3},\,0,\,1,\,\frac{p+1}{3}\right),
\]
which is given by \eqref{2mod32}.

The case $n_2+n_3=2$ is similar, yielding \eqref{1mod31}-\eqref{1mod33}. This concludes the argument for $p$ odd, finishing also the proof for the non-periodic case of Theorem \ref{mainth}.

\section{Proof of Theorem~\ref{mainth} for $M=\mathbb{T}$}
The proof follows the same line of ideas as the one for $M=\R$, the integral formula \eqref{apn} being replaced by \eqref{app}. Moreover, it is substantially simplified by the discrete nature of this case. This is why we mention only the relevant differences and leave the details for the interested reader.

\subsection{Argument for $p$ even} Here, we choose
\[
A_N\,=\,\left\{N,\, N+1\right\}\ \text{and} \ \xi\,=\,\frac{p}{2},
\]
the only possible representation being given by
\[
\xi\,=\,(N+1-N)\,+\,\ldots\,+\, (N+1-N).
\]
Therefore,
\[
-\beta\,=\,\frac{p}{2}\, \left(\lambda(N+1)-\lambda (N)\right),
\]
which implies the desired bound
\[
-\beta= |\beta| \approx N.
\]

\subsection{Argument for $p$ odd} In this instance, we take
\[
A_N\,=\,\left\{N,\, N+1, \, 2N\right\}\ \text{and} \ \xi\,=\,\frac{p+1}{2}\,+\, \lfloor\frac{p-3}{6}\rfloor.
\]
If one uses $\tilde{A}_N$ for $\{N,\,N+1\}$ and $\tilde{A}_{2N}$ for $\{2N\}$ in the context of the notations \eqref{n1}-\eqref{n4}, it can be proved, as before, that
\[
n_1\,>\,n_2, \quad n_3\,<\,n_4, \quad \text{and} \quad n_2+n_3\, \leq\, 2.\]
This allows us to solve the diophantine system, but, due to the specific value of $\xi$, a reduced number of generic profiles are obtained by comparison with the non-periodic case:
\begin{equation}
\xi=(N+1+N+1-2N)+\ldots+(N+1+N+1-2N) \ \text{if}\ p\equiv 0\, (\text{mod}\,3),
\label{0mod3p}
\end{equation}
\begin{equation}
\aligned
\xi= (N+1+N&+1-2N)+\ldots+(N+1+N+1-2N)\\
&+(N+1-N)+(N+1-N) \ \text{if}\ p\equiv 1\, (\text{mod}\,3),
\endaligned
\label{1mod3p}
\end{equation}
\begin{equation}
\aligned
\xi= (N+1+N+1-2N)&+\ldots+(N+1+N+1-2N)\\
&+(N+1-N) \ \text{if}\ p\equiv 2\, (\text{mod}\,3), 
\endaligned
\label{2mod3p}
\end{equation}
For all such representations, it is straightforward to deduce
\[
\beta= |\beta| \approx N^2,
\]
which finishes the proof.

\section*{Acknowledgements}
We are indebted to Nobu Kishimoto for pointing out to us the insufficiency of \eqref{ivhs} and \eqref{ratio} in claiming the discontinuity of the flow map. The first author was supported in part by the National Science Foundation Career grant DMS-0747656.

\bibliographystyle{amsplain}
\bibliography{bousbib}

\end{document}